\documentclass{panm}

\usepackage[svgnames]{xcolor}
\usepackage{graphicx}

\usepackage{amsthm}
\usepackage{amsmath}
\usepackage{amssymb}
\usepackage{mathrsfs}
\usepackage{algorithmic}
\usepackage[ruled]{algorithm}
\usepackage{hyperref} 

\newtheorem{theorem}{Theorem}
\newtheorem{lemma}[theorem]{Lemma}
\newtheorem{corollary}{Corollary}

\renewcommand{\vec}{\mathrm{vec}}
\renewcommand{\sp}{\mathrm{sp}}
\newcommand{\rank}{\mathrm{rank}}
\newcommand{\spn}{\mathrm{span}}
\newcommand{\diag}{\mathrm{diag}}

\title{On bounds for rank growth of iterands in conjugate gradients for Lyapunov equation}

\author{Jana Lungov\'a $^1$, Martin Ple\v singer$^1$\\
\vskip 2mm {\small
$^1$
Department of Mathematics, 
Technical University of Liberec\\
Studentsk\'a 1402/2, 461 17 Liberec 1, Czech republic\\
jana.lungova@tul.cz, martin.plesinger@tul.cz \\
}
}

\abstract{
We focus on solving the Lyapunov equation $AX + XA^T = F$, where $A$, $X$ and $F$ are 
square matrices, $A$ is symmetric positive definite (SPD) and sparse, and $F$ is symmetric 
and of a low-rank. The solution $X$ is then also symmetric and in general dense, but
it can be approximated by a low-rank matrix. If $n$ is large, $X$ cannot be computed 
directly, but it is accessible by using the so-called low-rank arithmetics (LRA). 
The equation is solved by matrix reformulation of the method of conjugate gradients (MCG). 
We are interested in the behavior of ranks of the solution approximations $X_\ell$, 
residuals $R_\ell$, and direction vectors $P_\ell$ during iterations.}

\keywords{conjugate gradients, Lyapunov equation, low-rank arithmetics} 

\MSC{15A06, 15A24, 65F10, 65F45, 65F55}

\begin{document}

\maketitle

\section{Introduction}
The urge to solve Lyapunov matrix equation comes from the control theory, 
specifically from the stability of linear dynamical systems, that are
represented by differential equations \cite{antoulas}. In ideal circumstances
the system is {\em controllable} and {\em observable}. For stable systems the 
controllability and observability are determined by corresponding grammians that 
satisfy the Lyapunov equation. 

For solving the Lyapunov equation we use the method of conjugate gradients (CG) 
in a special matrix form (MCG). First we will take a closer look at the structure 
of the equation and the possibility of using MCG. 

\section{The Lyapunov equation}\label{sec:Lyap_eq}

We consider the Lyapunov equation 
\begin{equation}\label{eq:lyap}
\mathscr{L}_A(X) = F, \qquad\text{where}\qquad \mathscr{L}_A(X) = AX + XA^T, \qquad A,X,F\in\mathbb{R}^{n\times n}.
\end{equation}
If $F$ is symmetric, $F^T=F$, then by transposing the whole equation we get
\[
\mathscr{L}_A(X^T) = AX^T + X^TA^T = (AX + XA^T)^T = F^T = F. 
\]
Moreover, if the Lyapunov equation (\ref{eq:lyap}) has the unique solution, then clearly $X^T=X$ has to be symmetric as well.

By vectorization of (\ref{eq:lyap}) and using Kronecker product notation we can 
rewrite the original equation in the form of standard linear system (see for example \cite{kpt})
\begin{equation}\label{eq:L=IA+AI}
L_Ax = f, \quad \text{where}\quad L_A = (A \otimes I_n + I_n \otimes A)\in\mathbb{R}^{n^2\times n^2}, \quad f = \vec(F)\in\mathbb{R}^{n^2}.
\end{equation}
Thus the unique solvability of (\ref{eq:lyap}) is equivalent to the regularity of $L_A$.

We further assume $A$ to be symmetric positive definite (SPD). Recall that for symmetric matrices, positive definiteness is equivalent
to positive eigenvalues,
\begin{equation}\label{spA}
	\sp(A) = \{\lambda_i(A) \;:\; i=1,\ldots,n\}.
\end{equation}
Then it is easy to see that Lyapunov matrix $L_A$ is symmetric as well, and also positive definite because (see for example \cite{kpt})
\begin{equation}\label{spLA}
	\sp(L_A) = \{\lambda_i(A) + \lambda_j(A) \;:\; i=1,\ldots,n, \, j=1,\ldots,n\},
\end{equation}
so $L_A$ is SPD and thus regular. This allows us to use CG (in fact MCG) for solving $L_Ax=f$ (in fact $AX+XA^T=F$, $x=\mathrm{vec}(X)$).  
If the right-hand side is of low rank, then the solution can be approximated by a low rank matrix; see \cite{kress}, \cite{Penzl}, or \cite{sabino}. 
This is important, because the prospective goal is to simulate the use of low-rank arithmetics (LRA), and also to use LRA in practical computation.

\section{Matrix formulation of the conjugate gradient method (MCG)}

The algorithm is fully analogous to standard CG, it is just slightly modified for solving our matrix equation;
see Algorithm \ref{alg:mcg}.
We replace all vectors by corresponding matrices, and matrix-vector product by application 
of the Lyapunov operator $\mathscr{L}_A$.

\begin{algorithm}[ht!]
\caption{MCG for solving matrix Lyapunov equation $AX+ XA^T = F$} \label{alg:mcg}
\begin{algorithmic}[1]
\STATE {\bf input} $A, F, X_0\in\mathbb{R}^{n\times n}$
\hfill\COMMENT{SPD matrix, right-hand side, init. est. (std. $X_0=0$)}
\STATE $R_0 \leftarrow F-\mathscr{L}_A(X_0)$ \label{line:init_res}
\hfill\COMMENT{initial residual}
\STATE $\rho_0 \leftarrow \langle R_0,R_0 \rangle$
\STATE $P_0 \leftarrow R_0$
\hfill\COMMENT{initial direction vector}
\FOR{$\ell = 1,2,\ldots$}
  \STATE $W_\ell \leftarrow \mathscr{L}_A(P_{\ell-1})$ \label{line:wk}
  \hfill\COMMENT{application of Lyapunov operator}
  \STATE $\alpha_\ell \leftarrow \rho_{\ell-1}/\langle W_\ell,P_{\ell-1} \rangle$
  \STATE $\beta_\ell \leftarrow \rho_{\ell-1}$
  \STATE $X_\ell \leftarrow X_{\ell-1} + P_{\ell-1} \alpha_\ell$ \label{line:sol_appr}
  \hfill\COMMENT{solution approximation}
  \STATE $R_\ell \leftarrow R_{\ell-1} - W_\ell \alpha_\ell$ \label{line:residual}
  \hfill\COMMENT{calculated residual}
  \STATE $\rho_\ell \leftarrow \langle R_\ell,R_\ell \rangle$
  \STATE $\beta_\ell \leftarrow \rho_\ell/\beta_\ell$
  \STATE $P_\ell \leftarrow R_\ell + P_{\ell-1} \beta_\ell$ \label{line:direc_vec}
  \hfill\COMMENT{direction vector} \mbox{}
\ENDFOR
\RETURN $X_\ell$
\end{algorithmic}
\end{algorithm}

The key issue of MCG is the propagation of rank, in particular how ranks of individual iterands $X_\ell$, $R_\ell$, $P_\ell$, and $W_\ell$ 
can grow with $\ell$. The implementation of LRA is then realized by working only with eigenvectors and nonzero eigenvalues of these
(symmetric) matrices. Concretely, $S=S^T$ is stored as $(V_S,\Lambda_S)$, where $S=V_S\Lambda_SV_S^T$ and ideally $V_S^TV_S=I$ and 
$\Lambda_S=\diag(\lambda_1(S),\lambda_2(S),\ldots)$.

\subsection{Propagation of rank in saxpy operations}\label{ssec:saxpy}

Saxpy operation is used four times, but only three times in every MCG iteration, specifically in lines~\ref{line:init_res} 
(where it is combined with application of Lyapunov operator $\mathscr{L}_A$; see 
Section~\ref{ssec:lyap}), \ref{line:sol_appr}, \ref{line:residual}, and \ref{line:direc_vec},
taking form of
\[
	S = M + N\omega, \qquad \text{where} \qquad S,M,N\in\mathbb{R}^{n\times n}, \quad\text{and}\quad \omega\in\mathbb{R}.
\]
Then symmetry of $M$ and $N$ implies symmetry of $S$; $M^T=M, N^T=N \Longrightarrow S^T=S$. Denote
$\varrho_M = \rank(M)$. 
Then the spectral decomposition of $M$ takes form
\[
	\begin{split}
	M = V_M\Lambda_M V_M^T, \qquad 
    & V_M\in\mathbb{R}^{n\times \varrho_M}, \qquad V_M^TV_M = I_{\varrho_M}, \\
	& \Lambda_M = \diag(\lambda_1(M),\lambda_2(M),\ldots,\lambda_{\varrho_M}(M)) \in\mathbb{R}^{\varrho_M\times \varrho_M}, \\
    & |\lambda_1(M)| \geq |\lambda_2(M)| \geq \cdots \geq |\lambda_{\varrho_M}(M)| > 0, 
	\end{split}
\]
similarly for $N$. 

The spectral decomposition of $S$ is obtained in four steps: {\em First} step is the formal saxpy  
\begin{equation}\label{eq:saxpy:0}
	S = V_M\Lambda_M V_M^T + V_N\Lambda_N V_N^T\omega 
	= [V_M,V_N] \left[\begin{array}{cc} \Lambda_M & 0 \\ 0 & \Lambda_N \omega \end{array}\right] [V_M,V_N]^T,
\end{equation}
yielding already a spectral-like decomposition. 
{\em Second} step is re-orthonormalization of $[V_M,V_N]$ by using thin QR decomposition
\begin{equation}\label{eq:saxpy:1}
	[V_M,V_N] = QR, \qquad Q\in\mathbb{R}^{n\times\varrho'}, \quad R\in\mathbb{R}^{\varrho'\times(\varrho_M+\varrho_N)}.
\end{equation}
{\em Third} step is spectral decomposing of {$\rho'$}-by-{$\rho'$} matrix
\begin{equation}\label{eq:saxpy:2}
	\left(R \left[\begin{array}{cc} \Lambda_M & 0 \\ 0 & \Lambda_N \omega \end{array}\right] R^T\right) = V\Lambda_S V^T, \qquad V\in\mathbb{R}^{\varrho'\times\varrho_S}, \quad \Lambda_S\in\mathbb{R}^{\varrho_S\times\varrho_S}.
\end{equation}
{\em Fourth} and final step is just a multiplication
\[
	S = (Q V)\Lambda_S (Q V)^T = V_S \Lambda_S V_S^T
\]
that results in the wanted decomposition of $S$.

Clearly, the rank of $S$ is bounded as follows
\[
	0 \leq |\varrho_M - \varrho_N| \leq 	\varrho_S \leq \varrho_M + \varrho_N \leq 2\max(\varrho_M,\varrho_N),
\]
i.e., in the worst case, the rank doubled in each of the lines \ref{line:sol_appr}, \ref{line:residual}, and \ref{line:direc_vec}.
The steps described by equations (\ref{eq:saxpy:1})--(\ref{eq:saxpy:2}) can be seen as a compression of expression (\ref{eq:saxpy:0}).
This compression point is the place, where we can control the rank growth in practical implementation (by rounding some eigenvalues
of $\Lambda_S$ to zero); see for example \cite{kpt}.

\subsection{Propagation of rank in application of Lyapunov operator $\mathscr{L}_A$}\label{ssec:lyap}

The Lyapunov operator $\mathscr{L}_A$ behaves similarly to the saxpy operation in terms of rank growth. 
It appears twice in the algorithm, but only once in every MCG iteration. Specifically in calculating 
initial residual $R_0$ in line \ref{line:init_res} (where it is followed by saxpy operation; see Section~\ref{ssec:saxpy}), 
and then in line \ref{line:wk}. It takes form of
\[
	S = \mathscr{L}_A(M) = AM + MA^T, \qquad \text{where} \qquad A,S,M,\in\mathbb{R}^{n\times n}.
\]
Again, symmetry of $M$ implies symmetry of $S$; $M^T=M\Longrightarrow S^T=S$.
Using the spectral decomposition of $M$,
\begin{equation}\label{eq:lyap_oper_1}
	\begin{split}
	S = AM + MA^T &= A V_M\Lambda_M V_M^T + V_M\Lambda_M V_M^T A^T \\
	 &= [A,V_M] \left[\begin{array}{cc} \Lambda_M & 0 \\ 0 & \Lambda_M \end{array}\right] [A,V_M]^T.
\end{split}
\end{equation}
This spectral-like decomposition of $S$ needs to be compressed. The compression stage is 
fully analogous to the saxpy case in Section \ref{ssec:saxpy}. Again the rank can be in 
the worst case doubled in line \ref{line:wk}. The compression is again the place where 
we can control the rank growth. 

\subsection{Experiment}\label{ssec:exp}

Altogether, Algorithm \ref{alg:mcg} with $X_0^T=X_0$ and $F^T=F$ preserves symmetry.
But it can suffer from four rank doublings per MCG iteration. 
Let us explore this behavior on a simple experiment. We used the discretization 
of the 1D Laplace operator with a Dirichlet boundary conditions to fulfill the role 
of the matrix $A$ (up to the sign), so it is SPD 
and sparse. Matrix $F$ is of ones, so symmetric and low-rank ($\varrho_F=1$),
\[ 
  A = \left[
	\begin{array}{ccccc}
		2 & -1 & & \color{gray}0 \\
		-1 & \ddots & \ddots & \\
		& \ddots & \ddots & -1 \\
		\color{gray}0 & & -1 & 2
	\end{array}
	\right], 
    \quad 
	F = \left[
	\begin{array}{ccc}
	1 & \cdots & 1 \\
	\vdots & \ddots & \vdots \\
	1 & \cdots & 1
\end{array}\right]
 = bb^T ,
    \quad 
b= \!\left[\begin{array}{c}
	1 \\ \vdots \\	1
\end{array}\right]\in\mathbb{R}^{100}.
\]
Thus the Lyapunov operator $\mathscr{L}_A$ represented by matrix $L_A$ is the discretization of 
the 2D Laplace operator with Dirichlet boundary conditions (up to the sign).

Now we inspect the numerical rank of the calculated solution approximations $X_\ell$; see Figure \ref{fig:eigs_xk}. 
Plot in this figure shows evolution of eigenvalues of matrices $X_\ell$ during MCG iterations;
numerical rank is emphasized by the black line. In the beginning we observe the rapid growth of the
rank (related to the potential rank doubling), which in principle can not continue for too long. 
Then we see, that the growth stopped and the rank stagnated (at rank $\sim50$) for significant 
number of iterations (roughly in iterations $100$--$200$). Surprisingly it stopped before reaching 
the dimension $n=100$. Then the rank decreases again, while absolute values of $\lambda_j(X_\ell)$ finally 
resemble an exponential decay; see for example \cite{kress}, \cite{Penzl}, or \cite{sabino}.

\begin{figure}[t!]
\begin{center}
\includegraphics[width=.8\textwidth]{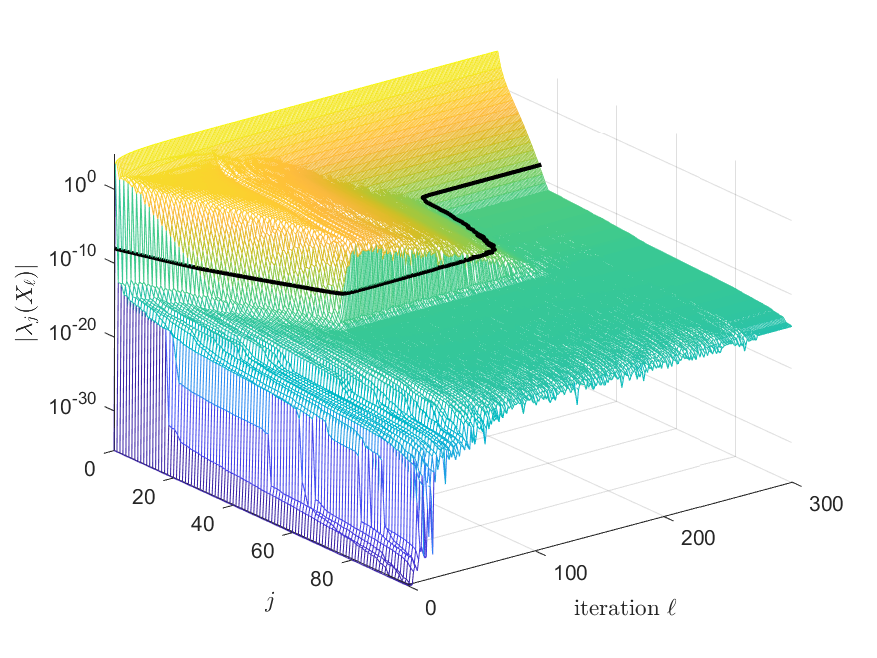}
\caption{Absolute values of $\lambda_j(X_\ell)$, $j=1,\ldots,100$, for $\ell=1,\ldots,300$. 
The black curve shows the progression of the numerical rank of $X_\ell$ measured as number of eigenvalues with absolute value greater 
than $\epsilon|\lambda_1(X_\ell)|n$, where $\epsilon$ is the machine precision.
The red line highlights the exponential decay of $|\lambda_j(X_{300})|$.}\label{fig:eigs_xk}
\begin{picture}(0,0)
    \put(0,-40){
        \put(22  ,372  ){\rotatebox{-5}{\color{red}\line(1,-2){1cm}}}
        \put(22.2,372.2){\rotatebox{-5}{\color{red}\line(1,-2){1cm}}}
        \put(22.4,372.4){\rotatebox{-5}{\color{red}\line(1,-2){1cm}}}
        \put(22.6,372.6){\rotatebox{-5}{\color{red}\line(1,-2){1cm}}}
        \put(22.8,372.8){\rotatebox{-5}{\color{red}\line(1,-2){1cm}}}
        \put(23  ,373  ){\rotatebox{-5}{\color{red}\line(1,-2){1cm}}}
    }
\end{picture}
\end{center}
\end{figure}

This intermediate rank-growth is limiting for usage of low-rank arithmetics. We can fight this using
preconditioners, however, this phenomenon is not fully understood yet; see, e.g., \cite{kpt}, \cite{eppler}, \cite{Simoncini}.
In the rest of this paper, we focus on answering the question: {\em Why does the rank stop to grow, while not reaching dimension $n=100$?}

\section{Krylov subspaces with Lyapunov operator}

It is well known, that the standard CG for solving equation $Ax = b$ with $x_0 = 0$ 
is closely related to the Krylov subspaces defined as
\[
\mathcal{K}_\ell(A,b) = \spn (\{b,Ab,A^2b,\ldots, A^{\ell-1}b\}).
\]
In particular
\begin{equation}\label{eq:wl_xl_rl_pl}
x_\ell \in \mathcal{K}_\ell(A,b), \;\;
r_\ell \in \mathcal{K}_{\ell+1}(A,b), \;\;
p_\ell \in \mathcal{K}_{\ell+1}(A,b), \;\;
w_\ell = Ap_{\ell-1} \in A\mathcal{K}_\ell(A,b);
\end{equation}
see \cite{hesteness}, or \cite{LiesenStrakos}.

\subsection{Relations between Krylov subspaces generated by $L_A$ and $\mathscr{L}_A$}

For the standard CG applied on $L_Ax=f$, where $L_A=A\otimes I_n+I_n\otimes A$, $f=\vec(F)$, see (\ref{eq:L=IA+AI}), 
the corresponding $\ell$th Krylov subspace takes form
\[
\mathcal{K}_\ell(L_A,f) = \spn\left( \{f,L_Af,L_A^2f, \ldots, L_A^{\ell-1}f\}\right) \subseteq \mathbb{R}^{n^2}.
\]
Since it represents vectorization of our MCG (so MCG is just matricized form of 
the standard CG), then the $\ell$th Krylov subspace in the MCG takes form
\[
\mathcal{K}_\ell(\mathscr{L}_A,F) = \spn\left( \{F,\mathscr{L}_A(F),\mathscr{L}_A^2(F), 
		\ldots, \mathscr{L}_A^{\ell-1}(F)\}\right) \subseteq \mathbb{R}^{n\times n},
\]
where 
\[
\mathscr{L}_A^\ell(F) = \mathscr{L}_A(\mathscr{L}_A^{\ell-1}(F))\qquad \text{and}\qquad \mathscr{L}_A^0(F) = F.
\]
It is useful to explicitly write down relations between these operators and subspaces.

\begin{lemma}\label{lem:vecL}
Let 
$\mathscr{L}_A$ be the Lyapunov operator defined in (\ref{eq:lyap}),
$L_A$ be the matrix defined in (\ref{eq:L=IA+AI})
$A \in\mathbb{R}^{n\times n}$ be an SPD matrix, 
and $F\in\mathbb{R}^{n\times n}$, $f=\vec(F)$.
Then
\begin{eqnarray}
\vec(\mathscr{L}_A^\ell(F)) &\!\!=\!\!& L_A^\ell f, \\
\vec(\mathcal{K}_\ell(\mathscr{L}_A,F)) &\!\!=\!\!& \mathcal{K}_\ell(L_A,f), \label{eq:lem1eq2}
\end{eqnarray}
where the vectorization on the left side of the latter equality is applied on every element of the set.
\end{lemma}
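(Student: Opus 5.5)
The plan is an induction on $\ell$, built on one base identity: $\vec(\mathscr{L}_A(M)) = L_A\,\vec(M)$ for every $M\in\mathbb{R}^{n\times n}$. To get it, I will use the standard Kronecker rule $\vec(BMC) = (C^T\otimes B)\vec(M)$, applied to each term separately:
\[
\vec(AM) = \vec(AMI_n) = (I_n\otimes A)\vec(M), \qquad \vec(MA^T) = \vec(I_nMA^T) = (A\otimes I_n)\vec(M).
\]
Adding the two and using linearity of $\vec$ gives $\vec(\mathscr{L}_A(M)) = (A\otimes I_n + I_n\otimes A)\vec(M) = L_A\vec(M)$, which is the vectorized form of (\ref{eq:L=IA+AI}). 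Notice that the transpose $A^T$ in $MA^T$ is exactly what makes the factor $A$ (rather than $A^T$) appear in the Kronecker product. So this step does not even need the symmetry of $A$; SPD is only needed elsewhere in the paper.

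For the first claim, the induction runs as follows. The case $\ell=0$ is trivial, since $\mathscr{L}_A^0(F)=F$ and $L_A^0f=f$. Assuming $\vec(\mathscr{L}_A^{\ell-1}(F)) = L_A^{\ell-1}f$, I apply the base identity with $M=\mathscr{L}_A^{\ell-1}(F)$:
\[
\vec(\mathscr{L}_A^{\ell}(F)) = L_A\,\vec(\mathscr{L}_A^{\ell-1}(F)) = L_A L_A^{\ell-1}f = L_A^\ell f.
\]

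The second claim, (\ref{eq:lem1eq2}), follows because $\vec$ is a linear bijection $\mathbb{R}^{n\times n}\to\mathbb{R}^{n^2}$, so it maps spans to spans. Concretely, a generic element $\sum_{j=0}^{\ell-1}\gamma_j\mathscr{L}_A^j(F)$ of $\mathcal{K}_\ell(\mathscr{L}_A,F)$ is sent to $\sum_j\gamma_j L_A^jf\in\mathcal{K}_\ell(L_A,f)$. Conversely, every element of $\mathcal{K}_\ell(L_A,f)$ has this form with the same coefficients, so it is the image of the corresponding matrix combination. This gives both inclusions.

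There is no real obstacle here. The only point needing care is getting the Kronecker ordering right, so that the two summands land on the correct factors.
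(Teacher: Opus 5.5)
Your proposal is correct and follows essentially the same route as the paper: the base identity $\vec(\mathscr{L}_A(M)) = L_A\vec(M)$, then induction on $\ell$ for the first claim, then linearity of $\vec$ for the Krylov-subspace equality. Your explicit derivation of the base identity via $\vec(BMC)=(C^T\otimes B)\vec(M)$ is a welcome improvement: the paper only reads it off from (\ref{eq:L=IA+AI}) by writing it at the solution, whereas you show it holds for every $M$.
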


\begin{proof} From (\ref{eq:lyap}) and (\ref{eq:L=IA+AI}) we have  $\vec(\mathscr{L}_A(X)) = \vec(F) = f = L_A\vec(X)$.
Thus for $X=F$, $\vec(\mathscr{L}_A(F)) = L_A f$. The rest is induction with $X=\mathscr{L}_A^{\ell-1}(F)$,
\[
    \vec(\mathscr{L}_A^\ell(F)) = \vec(\mathscr{L}_A(\mathscr{L}_A^{\ell-1}(F))) = L_A\vec(\mathscr{L}_A^{\ell-1}(F)) =  L_A(L_A^{\ell-1}f) = L_A^\ell f.
\]
The latter equality is then the consequence of linearity of vectorization.
\end{proof}

\subsection{Relations between Krylov subspaces generated by $\mathscr{L}_A$ and $A$}

Let us consider two corresponding elements in both Krylov spaces
\[
V \in \mathcal{K}_\ell(\mathscr{L}_A,F) \subseteq \mathbb{R}^{n\times n} \quad \longleftrightarrow \quad v \in \mathcal{K}_\ell(L_A,\vec(F)) \in \mathbb{R}^{n^2},
\]
the relation between them is given by the equality $\vec(V) = v$; see Lemma \ref{lem:vecL}. 
Consider a mapping that selects the $j$th column from a matrix,
\[\begin{split}
\phi_j : \mathbb{R}^{n\times n} &\longrightarrow \mathbb{R}^n  \qquad \text{so that}\qquad \phi_j(V) = Ve_j^{(n)},
\end{split}\]
where $e_j^{(n)}$ is the $j$th column of the identity matrix $I_n$. Corresponding mapping in the other space is 
\[\begin{split}
\phi_j : \mathbb{R}^{n^2} &\longrightarrow \mathbb{R}^n
  \qquad \text{so that}\qquad \phi_j(v) = \left[e_{(j-1)n+1}^{(n^2)}, \ldots, e_{jn}^{(n^2)}\right]^Tv.
\end{split}\]
We use the same symbol for both mappings. They are defined on different spaces but give 
the same result 
\begin{equation}\label{eq:phirel}
    \phi_j(V) = \phi_j(\vec(V)).
\end{equation}
Now we are ready to explore the relation between $\mathcal{K}_\ell(\mathscr{L}_A,F)$ and $\mathcal{K}_\ell(A,b)$.

\begin{theorem}\label{th:phiK}
Let 
$\mathscr{L}_A$ be the Lyapunov operator defined in (\ref{eq:lyap}),
$A \in\mathbb{R}^{n\times n}$ be an SPD matrix, 
and $F=bb^T\in\mathbb{R}^{n\times n}$.
Then the following applies
\begin{equation}\label{eq:mapping}
\phi_j \big(\mathcal{K}_\ell(\mathscr{L}_A,F)\big) \subseteq \mathcal{K}_\ell(A,b),
\end{equation}
where on the left side we apply the mapping $\phi_j$ on every element of the set.
\end{theorem}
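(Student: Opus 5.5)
Since $\phi_j$ is linear and $\mathcal{K}_\ell(A,b)$ is a linear subspace, it suffices to check the generators. That is, I will show $\phi_j(\mathscr{L}_A^k(F))\in\mathcal{K}_{k+1}(A,b)\subseteq\mathcal{K}_\ell(A,b)$ for every $k=0,\ldots,\ell-1$. Any element of $\mathcal{K}_\ell(\mathscr{L}_A,F)$ is a linear combination $\sum_k \gamma_k\mathscr{L}_A^k(F)$, so its image under $\phi_j$ is $\sum_k\gamma_k\phi_j(\mathscr{L}_A^k(F))$, which then lies in the subspace. The nesting $\mathcal{K}_{k+1}(A,b)\subseteq\mathcal{K}_\ell(A,b)$ for $k+1\le\ell$ is immediate from the definition.

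The key step is an explicit formula for $\mathscr{L}_A^k(F)$ with $F=bb^T$. Since $A$ is symmetric, $\mathscr{L}_A(M)=AM+MA$. The left multiplication $M\mapsto AM$ and the right multiplication $M\mapsto MA$ commute, so the binomial theorem applies to their sum:
\[
\mathscr{L}_A^k(F)=\sum_{i=0}^{k}\binom{k}{i}A^iF A^{k-i}=\sum_{i=0}^{k}\binom{k}{i}(A^ib)(A^{k-i}b)^T .
\]
I would prove this by induction on $k$, using Pascal's rule $\binom{k}{i}+\binom{k}{i-1}=\binom{k+1}{i}$ to carry out the step $\mathscr{L}_A^{k+1}(F)=A\,\mathscr{L}_A^k(F)+\mathscr{L}_A^k(F)\,A$.

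Applying $\phi_j$, the $j$th column is
\[
\phi_j(\mathscr{L}_A^k(F))=\sum_{i=0}^k\binom{k}{i}\big(e_j^{(n)T}A^{k-i}b\big)\,A^ib .
\]
This is a linear combination of $b,Ab,\ldots,A^kb$ with scalar coefficients, so it lies in $\mathcal{K}_{k+1}(A,b)$, which finishes the argument. The relation (\ref{eq:phirel}) and Lemma~\ref{lem:vecL} then give the same statement for the vectorized subspace $\mathcal{K}_\ell(L_A,f)$ if that form is wanted. No step is a real obstacle. The only point needing care is the commutation of left and right multiplication, which justifies the binomial expansion; it holds because $(AM)A=A(MA)$ by associativity. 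Positive definiteness of $A$ is not needed here; only its symmetry is used, so that $A^T$ can be replaced by $A$.
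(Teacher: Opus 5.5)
Your proof is correct, and it takes a different route from the paper. The paper never writes $\mathscr{L}_A^k(F)$ in closed form. It inducts on $\ell$ using the one-step column identity $\phi_j(\mathscr{L}_A(M)) = A\phi_j(M) + \sum_{k=1}^n \phi_k(M)\,a_{j,k}$. In words: if all columns of $M$ lie in $\mathcal{K}_{\ell-1}(A,b)$, then all columns of $\mathscr{L}_A(M)$ lie in $\mathcal{K}_{\ell-1}(A,b)+A\mathcal{K}_{\ell-1}(A,b)\subseteq\mathcal{K}_\ell(A,b)$.

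You instead use the fact that left multiplication by $A$ and right multiplication by $A^T$ commute. This gives $\mathscr{L}_A^k(bb^T)=\sum_{i=0}^k\binom{k}{i}(A^ib)(A^{k-i}b)^T$, from which you read off the columns.

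\textbf{What the paper's argument buys.} Its invariant is lighter. It needs no formula, and it can be pushed step by step through the saxpy and operator steps of Algorithm~\ref{alg:mcg}.

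\textbf{What your argument buys.} Your closed form carries more information. It says $\mathscr{L}_A^k(F)=K_{k+1}C_kK_{k+1}^T$, where $K_{k+1}=[b,Ab,\ldots,A^kb]$ and $C_k$ holds the binomial coefficients on its antidiagonal. Hence every element of $\mathcal{K}_\ell(\mathscr{L}_A,F)$ has the form $K_\ell H K_\ell^T$ with an $\ell\times\ell$ core. Consequences:
\begin{itemize}
\item Column and row spaces are controlled at the same time.
\item Bounds such as $\rank(X_\ell)\le\dim(\mathcal{K}_\ell(A,b))$ follow at once from $\rank(K_\ell H K_\ell^T)\le\rank(K_\ell)$.
\item This factored form is exactly what a low-rank implementation would store.
\item It avoids the index bookkeeping in the paper's base cases, which write $\mathcal{K}_0$ for what the stated definition calls $\mathcal{K}_1$.
\end{itemize}

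One small refinement of your closing remark: symmetry of $A$ is not needed either. For any $A$ and $F=bb^T$, $A^iF(A^T)^{k-i}=(A^ib)(A^{k-i}b)^T$. The paper's column identity likewise never uses $A^T=A$.
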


\begin{proof}
For $\ell=0$ the assertion is rather trivial,
\[
    \mathcal{K}_0(\mathscr{L}_A,F) = \spn\left( \{F\}\right), \qquad \text{thus} \qquad
    \phi_j\big(\mathcal{K}_0(\mathscr{L}_A,F)\big) = \phi_j\big(\spn(\{F\})\big).
\]
Recall that $F=bb^T$, $b=[\beta_1,\ldots,\beta_n]^T$. Since $\phi_j$ is linear mapping, then
\[
\phi_j\big(\spn(\{F\})\big) = \spn\big(\{\phi_j(F)\}\big) = \spn\big(\{b \beta_j\}\big) = \spn\big(\{b\}\big)\beta_j. 
\]
Finally
\[
    \spn\big(\{b\}\big)\beta_j = \left\{
    \begin{array}{ll}
    \{0\} & \text{(for $\beta_j=0$)} \\
    \spn(\{b\}) & \text{(for $\beta_j\neq0$)}
    \end{array}\right\}\subseteq \spn(\{b\}) = \mathcal{K}_0(A,b).
\]
Thus $\phi_j\big(\mathcal{K}_0(\mathscr{L}_A,F)\big) \subseteq \mathcal{K}_0(A,b)$. 

For clarity we prove it also for $\ell=1$. Using linearity of mapping $\phi_j$,
\[
\phi_j(\mathcal{K}_1(\mathscr{L}_A,F)) 
     = \phi_j(\spn (\{F, \mathscr{L}_A(F)\})) 
     = \spn( \{\phi_j(F), \phi_j(\mathscr{L}_A(F))\}). 
\]
We already know (from the $\ell=0$ case) that $\phi_j(F) \in \mathcal{K}_0(A,b) \subseteq \mathcal{K}_1(A,b)$.
Now we show it for the other vector
\[\begin{split}
     \phi_j(\mathscr{L}_A(F))
     &= \phi_j(AF) + \phi_j(FA^T) 
     = A\phi_j(F) + F\phi_j(A^T) \\ 
     &= A\phi_j(F) + {\textstyle\sum_{k=1}^n\nolimits}{\phi_k(F) a_{j,k}} 
     = A(b\beta_j) + {\textstyle\sum_{k=1}^n\nolimits}{(b\beta_k) a_{j,k}} \\
     &= (Ab)\beta_j + b \left({\textstyle\sum_{k=1}^n\nolimits}{\beta_k a_{j,k}}\right) \in \mathcal{K}_1(A,b).
\end{split}\]
Since $\phi_j(F)$, $\phi_j(\mathscr{L}_A(F))\in\mathcal{K}_1(A,b)$, thus $\phi_j\big(\mathcal{K}_1(\mathscr{L}_A,F)\big) \subseteq \mathcal{K}_1(A,b)$. 

Now we are ready for induction. Assume that 
\[
\phi_j(\mathcal{K}_{\ell-1}(\mathscr{L}_A,F))\subseteq \mathcal{K}_{\ell-1}(A,b).
\]
Analogously to the $\ell=1$ case,
\[\begin{split}
\phi_j(\mathcal{K}_\ell(\mathscr{L}_A,F)) &= \phi_j(\spn(\{F,\mathscr{L}_A(F), \ldots, \mathscr{L}_A^{\ell-2}(F), \mathscr{L}_A^{\ell-1}(F)\})) \\
 &= \spn(\{\phi_j(F),\phi_j(\mathscr{L}_A(F)), \ldots, \phi_j(\mathscr{L}_A^{\ell-2}(F)), \phi_j(\mathscr{L}_A^{\ell-1}(F))\}). \\
\end{split}\]
By the induction hypothesis
$\phi_j(F),\ldots,\phi_j(\mathscr{L}_A^{\ell-2}(F)) \in \mathcal{K}_{\ell-1}(A,b) \subseteq \mathcal{K}_\ell(A,b)$.
Therefore we only have to elaborate on the last vector 
\[
    \phi_j(\mathscr{L}_A^{\ell-1}(F)) = \phi_j(\mathscr{L}_A(\mathscr{L}_A^{\ell-2}(F))) 
     = A\phi_j(\mathscr{L}_A^{\ell-2}(F)) + {\textstyle\sum_{k = 1}^{n}\nolimits} {\phi_k(\mathscr{L}_A^{\ell-2}(F))a_{j,k}}. 
\]
Using the induction hypothesis once more 
\[
    A\phi_j(\mathscr{L}_A^{\ell-2}(F))\in A\mathcal{K}_{\ell-1}(A,b) \subseteq \mathcal{K}_{\ell}(A,b), \qquad
    \phi_k(\mathscr{L}_A^{\ell-2}(F))\in \mathcal{K}_{\ell-1}(A,b), 
\]
thus $\phi_j\big(\mathcal{K}_\ell(\mathscr{L}_A,F)\big) \subseteq \mathcal{K}_\ell(A,b)$. 
\end{proof}

\subsection{Krylov bound for ranks of iterands}

Recall the relations for the iterands $x_\ell$, $r_\ell$, $p_\ell$, and $w_\ell$ 
in the classical CG; see (\ref{eq:wl_xl_rl_pl}). In the case of MCG we 
immediately get
\[
    X_\ell\in\mathcal{K}_\ell(\mathscr{L}_A,F),\;\; 
    R_\ell\in\mathcal{K}_{\ell+1}(\mathscr{L}_A,F),\;\; 
    P_\ell\in\mathcal{K}_{\ell+1}(\mathscr{L}_A,F),\;\;
    W_\ell\in A\mathcal{K}_\ell(\mathscr{L}_A,F).
\]
Theorem \ref{th:phiK} has the following straightforward corollary.

\begin{corollary}\label{cor:main}
Let 
$\mathscr{L}_A$ be the Lyapunov operator defined in (\ref{eq:lyap}),
$A \in\mathbb{R}^{n\times n}$ be an SPD matrix, 
and $F=bb^T\in\mathbb{R}^{n\times n}$.
Consider the MCG applied on $\mathscr{L}_A(X)=F$ with the initial estimate $X_0=0$ 
and iterands $X_\ell$, $R_\ell$, $P_\ell$, and $W_\ell$; see Algorithm \ref{alg:mcg}.
Then
\begin{equation}\label{eq:kspaces}
    \begin{array}{rclcrcl}
    \phi_j (X_\ell) &\!\!\!\in\!\!\!& \mathcal{K}_\ell(A,b), &\quad&
    \phi_j (R_\ell) &\!\!\!\in\!\!\!& \mathcal{K}_{\ell+1}(A,b), \\
    \phi_j (P_\ell) &\!\!\!\in\!\!\!& \mathcal{K}_{\ell+1}(A,b), &\quad&
    \phi_j (W_\ell) &\!\!\!\in\!\!\!& A\mathcal{K}_\ell(A,b). \\
    \end{array}
\end{equation}
This further bounds the ranks of individual iterands
\begin{equation}\label{eq:dim}
    \begin{array}{rclcrcl}
    \rank(X_\ell) &\!\!\!\leq\!\!\!& \dim(\mathcal{K}_\ell(A,b)), &\quad&
    \rank(R_\ell) &\!\!\!\leq\!\!\!& \dim(\mathcal{K}_{\ell+1}(A,b)), \\
    \rank(P_\ell) &\!\!\!\leq\!\!\!& \dim(\mathcal{K}_{\ell+1}(A,b)), &\quad&
    \rank(W_\ell) &\!\!\!\leq\!\!\!& \dim(\mathcal{K}_\ell(A,b)). \\
    \end{array}
\end{equation}
\end{corollary}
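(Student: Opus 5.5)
The plan is to reduce everything to Theorem~\ref{th:phiK} together with the elementary fact that a matrix whose columns all lie in a subspace $\mathcal{V}\subseteq\mathbb{R}^n$ has rank at most $\dim(\mathcal{V})$.

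\emph{Step one} is to justify the membership relations $X_\ell\in\mathcal{K}_\ell(\mathscr{L}_A,F)$, $R_\ell,P_\ell\in\mathcal{K}_{\ell+1}(\mathscr{L}_A,F)$ and the relation for $W_\ell$. Here I would use Lemma~\ref{lem:vecL}. Since $\vec(\mathscr{L}_A(M))=L_A\vec(M)$ and vectorization is linear, the vectorized iterands $\vec(X_\ell)$, $\vec(R_\ell)$, $\vec(P_\ell)$, $\vec(W_\ell)$ are exactly the iterands of standard CG applied to $L_Ax=f$ with $x_0=0$. The relations (\ref{eq:wl_xl_rl_pl}) then hold for them with $\mathcal{K}(L_A,f)$. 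Pulling these back through (\ref{eq:lem1eq2}) gives the matrix memberships. Alternatively, a direct induction on $\ell$ over the lines of Algorithm~\ref{alg:mcg} works equally well: every line is either a linear combination of earlier iterands or one application of $\mathscr{L}_A$, which raises the Krylov index by one.

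\emph{Step two} applies $\phi_j$. For $X_\ell$, $R_\ell$ and $P_\ell$, the memberships from step one combined with (\ref{eq:mapping}) give the first three relations of (\ref{eq:kspaces}) immediately, for every $j=1,\ldots,n$. For $W_\ell=\mathscr{L}_A(P_{\ell-1})$ I would repeat the computation from the proof of Theorem~\ref{th:phiK}:
\[
\phi_j(W_\ell)=A\phi_j(P_{\ell-1})+{\textstyle\sum_{k=1}^n\nolimits}\phi_k(P_{\ell-1})a_{j,k}.
\]
The first term lies in $A\mathcal{K}_\ell(A,b)$ because $\phi_j(P_{\ell-1})\in\mathcal{K}_\ell(A,b)$.

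\emph{Step three} derives the rank bounds (\ref{eq:dim}). All $n$ columns $\phi_j(X_\ell)$ lie in $\mathcal{K}_\ell(A,b)$, so $\mathrm{range}(X_\ell)\subseteq\mathcal{K}_\ell(A,b)$ and therefore $\rank(X_\ell)\le\dim\mathcal{K}_\ell(A,b)$. The bounds for $R_\ell$ and $P_\ell$ follow in the same way. For $W_\ell$, once its columns are placed in $A\mathcal{K}_\ell(A,b)$, regularity of $A$ gives $\dim A\mathcal{K}_\ell(A,b)=\dim\mathcal{K}_\ell(A,b)$.

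The main obstacle is $W_\ell$. The second summand $\sum_k\phi_k(P_{\ell-1})a_{j,k}$ lies only in $\mathcal{K}_\ell(A,b)$, not obviously in $A\mathcal{K}_\ell(A,b)$. So the straightforward argument places $\phi_j(W_\ell)$ in $\mathcal{K}_\ell(A,b)+A\mathcal{K}_\ell(A,b)=\mathcal{K}_{\ell+1}(A,b)$. This is consistent with $W_\ell=(R_{\ell-1}-R_\ell)/\alpha_\ell$ and $R_\ell\in\mathcal{K}_{\ell+1}$. I would first try to exploit the paper's index convention (in the proof of Theorem~\ref{th:phiK}, $\mathcal{K}_0$ is already $\spn(\{b\})$, so the indices appear shifted by one). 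If the claimed form cannot be reached that way, I would settle for the bound $\rank(W_\ell)\le\dim\mathcal{K}_{\ell+1}(A,b)$, which the argument above does deliver, and flag that the stated $A\mathcal{K}_\ell$ version needs this index adjustment.
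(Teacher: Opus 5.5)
Your treatment of $X_\ell$, $R_\ell$, $P_\ell$ is what the paper intends. It uses the memberships $X_\ell\in\mathcal{K}_\ell(\mathscr{L}_A,F)$ and $R_\ell,P_\ell\in\mathcal{K}_{\ell+1}(\mathscr{L}_A,F)$ listed just before the corollary. It then applies Theorem~\ref{th:phiK} and the fact that a matrix whose columns lie in a subspace has rank at most that subspace's dimension. That part is fine.

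For $W_\ell$ you were right to stop. Resolve your hedge in the negative: the defect is in the statement, and no choice of indexing rescues the membership.

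With the paper's definition $\mathcal{K}_\ell(A,b)=\spn(\{b,\ldots,A^{\ell-1}b\})$, take $\ell=1$. Then $P_0=R_0=F$ and
\[
W_1=\mathscr{L}_A(bb^T)=Abb^T+bb^TA=[Ab,\,b]\begin{pmatrix}0&1\\1&0\end{pmatrix}[Ab,\,b]^T .
\]
Hence $\rank(W_1)=2>1=\dim\mathcal{K}_1(A,b)$ whenever $b$ is not an eigenvector of $A$. For example, $A=\diag(1,2)$ and $b=[1,1]^T$ give $W_1=\begin{pmatrix}2&3\\3&4\end{pmatrix}$, and $\phi_1(W_1)=[2,3]^T\notin\spn(\{Ab\})$.

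Under the shifted indexing used inside the proof of Theorem~\ref{th:phiK}, $\mathcal{K}_\ell=\spn(\{b,\ldots,A^{\ell}b\})$, the rank inequality for $W_\ell$ does become true: it is then just your bound $\dim\spn(\{b,\ldots,A^\ell b\})$. The membership still fails, though. For $A=\diag(1,2,3)$ and $b=[1,1,1]^T$ one gets $\phi_1(W_1)=\beta_1Ab+(Ab)_1b=[2,3,4]^T$. Since $\det[\phi_1(W_1),Ab,A^2b]=2\neq0$, we have $\phi_1(W_1)\notin\spn(\{Ab,A^2b\})=A\mathcal{K}_1(A,b)$.

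The paper's one-line justification breaks exactly where you located the problem. It writes $W_\ell\in A\mathcal{K}_\ell(\mathscr{L}_A,F)$, which should read $\mathscr{L}_A\mathcal{K}_\ell(\mathscr{L}_A,F)$. It then tacitly treats $\phi_j\circ\mathscr{L}_A$ as $A\circ\phi_j$, dropping the term $\sum_k\phi_k(\cdot)a_{j,k}$ that comes from $XA^T$. The paper's own $\ell=1$ computation in the proof of Theorem~\ref{th:phiK} already exhibits this extra multiple of $b$.

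Your fallback is therefore the correct statement, and you should present it as a correction rather than a concession:
\[
\phi_j(W_\ell)\in\mathcal{K}_\ell(A,b)+A\mathcal{K}_\ell(A,b)=\mathcal{K}_{\ell+1}(A,b),\qquad \rank(W_\ell)\le\dim\mathcal{K}_{\ell+1}(A,b).
\]
This still yields $\rank(W_\ell)\le\kappa_{A,b}$, so Corollary~\ref{cor:main3} is unaffected.
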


\section{Maximal number of iterations and maximal ranks of iterands in MCG}

Consider for a moment classical CG applied on $Ax=b$, where $A$ is SPD matrix. Denote
\begin{equation}\label{kapA}
	\kappa_A = |\sp(A)|
\end{equation}
the number of distinct eigenvalues of $A$. Consider also the spectral decomposition
\[
    A = V_A\Lambda_A V_A^T, \qquad V_A=[V_1,V_2,\dots,V_{\kappa_A}], \qquad V_A^{-1}=V_A^T,
\]
where columns of $V_s$ form an orthonormal basis of eigenspace corresponding to the $s$th 
largest eigenvalue, $s=1,2,\ldots,\kappa_A$. Further denote 
\begin{equation}\label{kapAb}
	\kappa_{A,b} = \mathrm{nnz}([\|V_1^Tb\|,\|V_2^Tb\|,\ldots,\|V_{\kappa_A}^Tb\|]),
\end{equation}
the number of nonzero components of $b$ in eigenspaces of $A$. Clearly $\kappa_{A,b}\leq \kappa_{A}$.
It is well known that (in exact arithmetics):
\begin{itemize}\itemsep 0pt
\item If $\mathcal{K}_\ell(A,b)=\mathcal{K}_{\ell+1}(A,b)$, then $\dim(\mathcal{K}_\ell(A,b))=\kappa_{A,b}$ and
\item CG applied on $Ax=b$ with $x_0=0$ converges in the $\kappa_{A,b}$th iteration; 
\end{itemize}
see for example \cite{LiesenStrakos}.

\subsection{Maximal number of iterations of MCG}

To get the exact number of iterations of MCG applied on $\mathscr{L}_A(X)=F$, $F=bb^T$, with $X_0=0$,
we need to reveal the value of $\kappa_{L_A,f}$, $f=\vec(F)$. Since it is bounded from above
by the number of distinct eigenvalues of $L_A$, we can immediately see, using (\ref{spLA}), that
\[
    \kappa_{L_A} = |\sp(L_A)| \leq \frac{\kappa_A(\kappa_A+1)}2.
\] 
Note that the eigenvalues of $L_A$ cannot be all simple (unless $n=1$) due to the commutativity of $+$.
Also note that the inequality there could be strict, e.g., for $\sp(A)=\{1,2,3\}$, we get $\kappa_A = 3$, 
$\kappa_{L_A} = 5$. So the number of distinct eigenvalues of $A$ only bounds the number of distinct eigenvalues
of $L_A$. Further multiplicities in the spectrum of $L_A$ could appear due to particular values of $\lambda_i(A)$s. 
 
Similarly, using suitable choices of bases of $V_s$, $s=1,2,\ldots,\kappa_A$, it can be shown that
\[
    \kappa_{L_A,f} \leq \frac{\kappa_{A,b}(\kappa_{A,b}+1)}2;
\] 
see for example \cite{LungovaDP}. 

 
\subsection{Maximal ranks of iterands in MCG}

The behavior of classical CG together with Corollary \ref{cor:main} answers how much
the ranks of individual iterands can grow. 

\begin{corollary}\label{cor:main3}
Let 
$\mathscr{L}_A$ be the Lyapunov operator defined in (\ref{eq:lyap}),
$A \in\mathbb{R}^{n\times n}$ be an SPD matrix, 
and $F=bb^T\in\mathbb{R}^{n\times n}$.
Consider the MCG applied on $\mathscr{L}_A(X)=F$ with the initial estimate $X_0=0$ 
and iterands $X_\ell$, $R_\ell$, $P_\ell$, and $W_\ell$; see Algorithm \ref{alg:mcg}.
Then
\begin{equation}\label{eq:dim2}
    \rank(X_\ell) \leq \kappa_{A,b}, \; 
    \rank(R_\ell) \leq \kappa_{A,b}, \; 
    \rank(P_\ell) \leq \kappa_{A,b}, \; 
    \rank(W_\ell) \leq \kappa_{A,b},
\end{equation}
where $\kappa_{A,b}$ is given in (\ref{kapAb}).
\end{corollary}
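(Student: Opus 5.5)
The plan is to combine Corollary~\ref{cor:main} with the well-known stagnation property of Krylov subspaces recalled at the beginning of this section. Corollary~\ref{cor:main} gives, for every column index $j$, that $\phi_j(X_\ell)\in\mathcal{K}_\ell(A,b)$, $\phi_j(R_\ell),\phi_j(P_\ell)\in\mathcal{K}_{\ell+1}(A,b)$ and $\phi_j(W_\ell)\in A\mathcal{K}_\ell(A,b)$. Since the rank of a matrix equals the dimension of its column space, and all columns of the iterand lie in the respective subspace, it suffices to bound the dimension of each subspace by $\kappa_{A,b}$ uniformly in $\ell$.

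The key step is therefore to show $\dim(\mathcal{K}_\ell(A,b))\le\kappa_{A,b}$ for all $\ell$. I would argue directly rather than rely only on the quoted stagnation fact. Write $b=\sum_{s=1}^{\kappa_A} V_sV_s^Tb$ and note that only $\kappa_{A,b}$ of the components $b_s:=V_sV_s^Tb$ are nonzero. Then $A^kb=\sum_s\lambda_s^k b_s$, so every power $A^kb$ lies in $\spn(\{b_s : b_s\neq0\})$. This span has dimension exactly $\kappa_{A,b}$, since the $b_s$ lie in mutually orthogonal eigenspaces. Hence $\mathcal{K}_\ell(A,b)\subseteq\spn(\{b_s:b_s\ne0\})$ for every $\ell$, including $\ell+1$. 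Equivalently, one can invoke the stated fact: the Krylov sequence is nested and its dimension stabilizes at $\kappa_{A,b}$ once $\mathcal{K}_\ell=\mathcal{K}_{\ell+1}$, so it never exceeds that value.

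For $W_\ell$, observe that $A\mathcal{K}_\ell(A,b)\subseteq\mathcal{K}_{\ell+1}(A,b)$, so the same bound applies; alternatively, since $A$ is regular, $\dim A\mathcal{K}_\ell=\dim\mathcal{K}_\ell$. Substituting into (\ref{eq:dim}) then yields all four inequalities in (\ref{eq:dim2}).

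I do not expect a real obstacle. The only point needing care is justifying that the bound holds for all $\ell$, not only up to the iteration at which MCG converges. The span argument above handles this uniformly. After convergence the iterands $R_\ell$, $P_\ell$ and $W_\ell$ would be zero in exact arithmetic anyway, so the bound holds trivially there.
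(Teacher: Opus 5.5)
Your argument is the paper's argument. Corollary~\ref{cor:main} places every column of each iterand in some $\mathcal{K}_m(A,b)$, and $\dim\mathcal{K}_m(A,b)\le\kappa_{A,b}$ for every $m$. The paper only cites the classical stagnation fact for the second part. Your decomposition $A^kb=\sum_s\lambda_s^kV_sV_s^Tb$ proves it directly and uniformly in $m$, which is cleaner.

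\textbf{The $W_\ell$ step needs repair.} The inclusion $\phi_j(W_\ell)\in A\mathcal{K}_\ell(A,b)$ that you quote from Corollary~\ref{cor:main} is false in general; the slip is the paper's, and you inherited it. Already for $\ell=1$ you have $W_1=\mathscr{L}_A(bb^T)=Abb^T+bb^TA$, whose $j$th column is $\beta_j\,Ab+(Ab)_j\,b$. The $MA^T$ part of the operator contributes a multiple of $b$, so for any $j$ with $(Ab)_j\neq0$ this column is not in $\spn(\{Ab\})=A\mathcal{K}_1(A,b)$ once $b$ and $Ab$ are linearly independent. Moreover $W_1=[Ab,b][b,Ab]^T$ has rank $2>1=\dim\mathcal{K}_1(A,b)$ whenever $b$ is not an eigenvector of $A$. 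So your alternative route via $\dim A\mathcal{K}_\ell=\dim\mathcal{K}_\ell$ passes through a false intermediate inequality and should be dropped.

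The correct inclusion is $\phi_j(W_\ell)\in\mathcal{K}_{\ell+1}(A,b)$. Since $W_\ell=\mathscr{L}_A(P_{\ell-1})$ with $P_{\ell-1}\in\mathcal{K}_\ell(\mathscr{L}_A,F)$, you get $W_\ell\in\mathcal{K}_{\ell+1}(\mathscr{L}_A,F)$, and Theorem~\ref{th:phiK} gives the column inclusion. Directly, $\phi_j(W_\ell)=A\phi_j(P_{\ell-1})+\sum_k a_{j,k}\phi_k(P_{\ell-1})$ lies in $\mathcal{K}_{\ell+1}(A,b)$. Combined with your uniform bound $\dim\mathcal{K}_{\ell+1}(A,b)\le\kappa_{A,b}$, this gives $\rank(W_\ell)\le\kappa_{A,b}$. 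The corollary therefore holds, and your first route is right once the quoted inclusion is replaced by this one.
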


\subsection{Back to the experiment}

Recall our experiment in Section \ref{ssec:exp}.
The given matrix $A$ (discretization of 1D Laplace operator with Dirichlet boundary condition) 
has the following eigenvalues (all simple) and eigenvectors
\[
    \lambda_j = 2-2\cos\left(\frac{j\pi}{n+1}\right), \quad v_j = \left[ \sin\left(\frac{t j\pi}{n+1}\right)\right]_{t=1}^n, \quad j=1,\ldots,n,
\]
thus $\kappa_A = n = 100$, $\kappa_{L_A}\leq \frac{n(n+1)}{2}=5050$. Since $F=bb^T$ where
$b=[1,1,\ldots,1]^T$, then the inner-products
\[
    \langle b,v_j\rangle = \sum_{t=1}^{n}{\sin\left(\frac{t j\pi}{n+1}\right)} 
    \left\{
    \begin{array}{ll}
    \neq 0 & \text{for $j=1,3,5,\ldots$ (odd)} \\
    = 0 & \text{for $j=2,4,6,\ldots$ (even)} \\
    \end{array}\right..
\]
and $\kappa_{A,b}=\lfloor\frac{n}2\rfloor=50$, $\kappa_{L_A,f}\leq\frac{n(n+2)}8=1275$. 

In particular, we see the effect of bound $\rank(X_\ell)\leq \kappa_{A,b}=50$ in Figure \ref{fig:eigs_xk}
between iterations $100$--$200$. The rank here stagnates and cannot grow over $50$.

\section{Conclusions}

We have studied the MCG method applied on Lyapunov equation $\mathscr{L}_A(X)=bb^T$ with a rank-one right-hand side. 
We have presented an important theorem with several corollaries, while establishing bounds for ranks of all MCG iterands. 
These ranks are controlled by the spectral properties of matrix $A$, while MCG convergence by properties of operator 
$\mathscr{L}_A$. An experiment shows that our bound is tight and the theoretical maximal allowed rank can also be 
attained for a reasonable problem.

\section*{Acknowledgements}
This work was supported by internal grant SGS-2026-4602 of TU Liberec.


\begin{thebibliography}{13}

\bibitem{antoulas}
Antoulas, A. C.:
\emph{Approximation of Large-Scale Dynamical Systems},
Advances in Design and Control 6.
SIAM Publishing, Philadelphia, 2005.

\bibitem{eppler}
Bollh\"{o}fer, M. and Eppler, A. K.:
Low-rank Cholesky factor Krylov subspace methods for generalized projected Lyapunov equations.
In: P. Benner (Ed.) \emph{System Reduction for Nanoscale IC Design}, Mathematics in Industry 20, pp. 157--193.
Springer, Cham 2017.


\bibitem{hesteness}
Hestenes, M. R. and Stiefel, E.:
Methods of conjugate gradients for solving linear systems. 
J. Res. Natl. Bur. Stand. {\bf 49}(6) (1952), 409--436.

\bibitem{kpt}
Kressner, D., Ple\v{s}inger, M., and Tobler, C.:
A preconditioned low-rank CG method for parameter-dependent Lyapunov matrix equations.
Numer. Linear Algebra Appl. {\bf 21}(5) (2014), 666--684.

\bibitem{kress}
Kressner, D. and Tobler, C.:
Krylov subspace methods for linear systems with tensor product structure. 
SIAM J. Matrix Anal. Appl. {\bf 31}(4) (2010), 1688--1714.

\bibitem{LiesenStrakos}
Liesen, J. and Strako\v{s}, Z.:
\emph{Krylov Subspace Methods},
Numerical Mathematics and Scientific Computations. 
Oxford University Press, Oxford, 2013.


\bibitem{LungovaDP}
Lungov\'{a}, J.:
\emph{Selected properties of the conjugate gradient method when applied on a matrix Lyapunov equation}.
Diploma thesis, TU Liberec, Liberec, 2025.

\bibitem{Penzl}
Penzl, T.:
Eigenvalue decay bounds for solutions of Lyapunov equations: The symmetric case.
Syst Control Lett. {\bf 40}(2) (2000), 139--144.

\bibitem{sabino}
Sabino, J.:
\emph{Solution of large-scale Lyapunov equations via the block modified smith methods}. 
Ph.D. Thesis, Department of Computational and Applied Mathematics, Rice University, Houston, 2006


\bibitem{Simoncini}
Simoncini, V. and Hao, Y.:
Analysis of the truncated conjugate gradient method for linear matrix equations.
SIAM J. Matrix Anal. Appl. {\bf 44}(1) (2023), 359--381.


\end{thebibliography}
\end{document}